\documentclass[reqno]{amsart}

\newtheorem{theorem}{Theorem}[section]

\theoremstyle{definition}

\usepackage[T1]{fontenc}
\usepackage[utf8]{inputenc}
\usepackage{lmodern}
\usepackage{textcomp}
\usepackage{microtype}
\usepackage{tikz}
\usepackage{mathtools, bm, nccmath}

\newcounter{proofstep}

\usepackage[english]{babel}
\usepackage{csquotes}
\usepackage[export]{adjustbox}
\usepackage{circuitikz}

\usepackage{tabularx}

\usepackage{graphicx}
\usepackage{setspace}

\usepackage{amsmath}
\usepackage{amsfonts}
\usepackage{amssymb}
\usepackage{amsthm}
\usepackage{xcolor}%
\usepackage{soul}
\usepackage{graphicx} 
\usepackage{lipsum}
\numberwithin{equation}{section}

\newcommand*{\N}{\mathbb{N}}
\newcommand*{\Z}{\mathbb{Z}}

\newcommand*{\R}{\mathbb{R}}
\newcommand*{\C}{\mathbb{C}}

\newcommand*{\flap}{(-\Delta_g)^{\frac{\alpha}{2}}}

\usepackage{thmtools}

\declaretheorem[
	name=Remark,
	style=remark,
	numbered=no
	]{rem}

\newcounter{thmletter}

\usepackage[pdftex,pdfpagelabels]{hyperref}

\numberwithin{equation}{section}

\newcommand\restr[2]{{
  \left.\kern-\nulldelimiterspace 
  #1 
  \littletaller 
  \right|_{#2} 
  }}

\newcommand{\littletaller}{\mathchoice{\vphantom{\big|}}{}{}{}}

\makeatletter
\@namedef{subjclassname@2020}{\textup{2020} Mathematics Subject Classification}
\makeatother

\calclayout

\begin{document}
\title[Eigenvalue bounds for non-self-adjoint Schr\"odinger operators]
{Eigenvalue bounds for non-self-adjoint Schr\"odinger operators and pseudodifferential generalizations}

\author[E. Stefanescu]{Eduard Stefanescu}
\address{Institut f\"ur Analysis und Zahlentheorie, TU Graz, Steyrergasse 30, 8010 Graz, Austria}
\email{\href{mailto:eduard.stefanescu@tugraz.at}{eduard.stefanescu@tugraz.at}}

\subjclass[2020]{35P05, 47A10, 47A75; 35P15, 81Q10.}
\keywords{Schr\"odinger operator, spectral theory, eigenvalues, non-self-adjoint operators}

\begin{abstract}
This is mostly a survey paper, where we collect results concerning the spectral bounds of deterministic and random Schr\"odinger operators with complex potentials, both on \(\mathbb{R}^d\) and on compact manifolds. The survey part is complemented by a new theorem, where we extend the result on spectral bounds on compact manifolds to the case of fractional Laplacians, applying methods by Cuenin and Sogge. These bounds are formulated in terms of the \(L^p\)-norms of the corresponding potentials. 
\end{abstract}

\maketitle

\section{Introduction}

In this paper, we study eigenvalue bounds for  Schr\"odinger operators $H:=-\Delta+V$
on $\mathbb{R}^d$ as well as on compact manifolds, where the bounds are expressed in terms of $L^p$-norms of the possibly complex-valued potential $V$.

When the potential $V : \mathbb{R}^d \to \mathbb{R}$ is real-valued and sufficiently regular (e.g. $V\in L_{\text{loc}}^{d/2}$),
the operator $H$ is self-adjoint on a natural domain (e.g. $H^1(\R^d)$) in $L^2(\mathbb{R}^d)$.
Its spectral properties admit a clear physical interpretation:
the spectrum is real, the time evolution is unitary, and energy is conserved.

By contrast, allowing \(V\) to be complex-valued fundamentally alters the spectral
picture, as self-adjointness of the Hamiltonian \(H\) is lost.
This loss gives rise to complex spectra, spectral instability, the breakdown
of variational principles, and necessitates different techniques.

A main new result of this paper is an extension of eigenvalue bounds of this type to fractional Schr\"odinger operators on closed Riemannian manifolds. More precisely, we prove spectral inclusion bounds for operators of the form \((-\Delta_g)^{\alpha/2}+V\), where \(V\in L^q(M)\) may be complex-valued, thereby generalizing the compact-manifold result by Cuenin \cite{CJC} from the classical Laplacian to the fractional setting.

The paper is organized as follows. In Section~\ref{appA} we introduce notation and collect the basic analytic tools used throughout the paper. Section \ref{sec3} briefly reviews early motivations for non-self-adjoint Schr\"odinger operators in physics and mathematics. Section \ref{sec4} recalls classical eigenvalue bounds in the self-adjoint setting, including Keller-type estimates, the Birman--Schwinger principle, Schatten--von Neumann ideals, and Lieb--Thirring-type inequalities. Section \ref{sec5} then turns to the non-self-adjoint setting and surveys deterministic and random eigenvalue bounds for complex decaying potentials on \(\mathbb R^d\), as well as deterministic and forthcoming random results on compact manifolds. In Section \ref{sec6} we state our extension to fractional Laplacians on compact manifolds, and in Section \ref{sec7} we give a short proof-idea. The full proof, including broader extensions and additional results, will be published by the author in a forthcoming publication.

In Appendix~\ref{app} we collect some operator-theoretic background for the compact-manifold setting, including the compactness of the resolvent of the free Laplacian, the resulting discreteness of the spectrum, and the corresponding extension to elliptic self-adjoint pseudodifferential operators of positive order, such as the fractional Laplacian. We also briefly recall that for bounded potentials \(V\in L^\infty(M)\), the operator \((-\Delta_g)^{\alpha/2}+V\) can be defined by quadratic form methods as an \(m\)-sectorial operator.

\section{Notation and Preliminaries}\label{appA}
Throughout this paper, the dimension $d \geq 1$ is assumed to be fixed. We denote by \(D(c,r)\subset \mathbb{C}\) the closed disc centered at \(c\) with radius \(r\). For $A,B\in\R$ we write $A\lesssim B$, whenever there is a constant $c\ge0$ such that $A\leq cB$. If $A\lesssim B\lesssim A$, we write $A\simeq B$.

Let $\Omega \subset \mathbb{R}^d$. For $1 \le p < \infty$ we define
\[
L^p(\Omega)
:=
\left\{
f \text{ measurable} : \|f\|_{L^p(\Omega)} < \infty
\right\},
\qquad
\|f\|_{L^p(\Omega)}^p
:=
\int_\Omega |f|^p \, d\mu,
\]
and
\[
L^\infty(\Omega)
=
\left\{
f:\Omega\to\mathbb{C}\text{ measurable} :
\operatorname*{ess\,sup}_{x\in\Omega}|f(x)|<\infty
\right\}.
\]

We utilize Einstein summation notation. For a fixed Riemannian manifold $M$ with metric $g$ we define the Laplace Beltrami operator as
\[
\Delta_g
:=
\frac{1}{\sqrt{|g|}}
\partial_i\!\left(
\sqrt{|g|}\, g^{ij}\, \partial_j
\right).
\]
Hence in $\mathbb{R}^d$ we have $\Delta=\partial_j\partial^j.$
For spectral-properties on the Laplace Beltrami operator see Appendix \ref{app}.

By abuse of notation we write $H=-\Delta+V$, where $V$ is the multiplication operator by the function denoted by the same symbol $V(x)$, i.e.
\[
(Vf)(x)=V(x)f(x).
\]


Let $M$ be compact and \((e_j)_{j=0}^\infty\) be an orthonormal basis of \(L^2(M)\) consisting of eigenfunctions of \(-\Delta_g\), with corresponding eigenvalues \(\lambda_j^2\).
For \(\alpha>0\), the fractional Laplacian \((-\Delta_g)^{\alpha/2}\) is defined via spectral calculus by
\[
(-\Delta_g)^{\alpha/2}f
:=
\sum_{j=0}^\infty \lambda_j^\alpha \langle f,e_j\rangle e_j,
\]
for all \(f\) in the domain
\[
\mathcal{D}\bigl((-\Delta_g)^{\alpha/2}\bigr)
=
\left\{
f\in L^2(M):
\sum_{j=0}^\infty \lambda_j^{2\alpha} |\langle f,e_j\rangle|^2<\infty
\right\}.
\]

\section{Early works on non-self-adjoint Schr\"odinger operators}\label{sec3}

\subsection{Early motivation in physics}

From the physical perspective, non-self-adjoint Schrödinger operators, e.g. with complex potentials, arise naturally in quantum
mechanics as effective descriptions of open systems.
Decaying complex potentials already appear in the seminal work of
Gamow~\cite{Gamow}, where complex energies are used to model radioactive decay.
One of the earliest systematic investigations of spectral problems without the
assumption of self-adjointness is due to Siegert~\cite{Siegert}, who observed that
imposing outgoing boundary conditions leads naturally to eigenvalue problems with
complex eigenvalues.

\subsection{Early motivation in mathematics}

On the mathematical side, the study of non-self-adjoint Hamiltonians required the
development of new operator-theoretic tools.
Foundational contributions were made by Kato beginning in the 1950s
\cite{KT,KT2}, culminating in his monograph on perturbation theory~\cite{KT1}.
A decisive breakthrough in the rigorous treatment of complex energies was achieved
in 1971 by Aguilar and Combes~\cite{AgCom}, and independently by Balslev and
Combes~\cite{BaCom}, who introduced dilation analyticity and complex scaling methods.
These techniques allowed resonances to be identified as eigenvalues of
non-self-adjoint deformations of the Schr\"odinger operator.

Subsequently, these ideas were developed and refined throughout the 1970s and early
1980s in the works of Simon~\cite{SimBarr,SimBarr2}, Hunziker~\cite{Hunz}, and
Davies~\cite{Dav1}.
Their contributions clarified the analytic structure of resonances, highlighted
the intrinsic spectral instability of non-self-adjoint operators, and established
a rigorous framework for the spectral analysis of complex Schr\"odinger operators.
However, this body of work was primarily qualitative in nature and did not yet aim
at explicit quantitative bounds on eigenvalues.

\section{Eigenvalue bounds}\label{sec4}

Throughout this section, let $V:=V_++V_-$, where $V_+=\max\{V,0\}$ and $V_-:=\max\{-V,0\}$, and assume $V_+\in L_{\rm loc}^1$ and $V_-\in L_{\rm loc}^{d/2}$.
\subsection{Keller-type estimates} A first step toward quantitative control of the lowest eigenvalue in terms of integral norms of the potential goes back to Keller~\cite{K61}. In his 1961 paper, Keller considered the one-dimensional Schr\"odinger operator and studied the variational problem of minimizing the lowest eigenvalue under a prescribed integral constraint on the potential.

In modern notation, it is convenient to formulate the problem for the Schr\"odinger operator
\[
H_V=-\frac{d^2}{dx^2}+V(x)
\qquad\text{on }L^2(\mathbb R).
\]

Then Keller's result may be viewed as a bound of the form
\[
\lambda_1(V)\ge -C\,\|V_-\|_{L^p(\mathbb R)}^{\frac{2p}{2p-1}},
\qquad p\ge 1,
\]
where \(\lambda_1(V)<0\) denotes the lowest eigenvalue. Since \(\lambda_1(V)\) is negative, this is equivalently written as
\[
|\lambda_1(V)|\le C\,\|V_-\|_{L^p(\mathbb R)}^{\frac{2p}{2p-1}}.
\]
In the limiting case \(p=1\), Keller noted that the result reduces to earlier unpublished work of Larry Spruch. The minimizing potential found by Keller is of the type introduced by Epstein~\cite{E30}.

Keller's bound was the first to demonstrate that individual eigenvalues of
Schr\"odinger operators can be quantitatively controlled by Lebesgue norms of the
potential.
Moreover, the variational problem underlying Keller-type eigenvalue bounds is equivalent, via a duality principle, to the problem of determining sharp constants in suitable Sobolev interpolation inequalities, see e.g. \cite{CFL14,LT76}.

An important subsequent development was the emergence of more refined tools in
quantitative spectral theory for Schr\"odinger operators.
In particular, this led to the formulation and systematic use of several
fundamental principles:

\subsection{The Birman--Schwinger principle \cite{Birman1961,Birman1962,Schwinger1961}}\label{BS}
It states that if \(H_0\) is a self-adjoint operator and \(H=H_0+V\) is a
(possibly non-self-adjoint) perturbation, then for any
$\lambda\in\rho(H_0)$ the value $\lambda$ is an eigenvalue of $H$ if and
only if $-1$ is an eigenvalue of the associated Birman--Schwinger operator
$K(\lambda)=|V|^{1/2}(H_0-\lambda)^{-1}\operatorname{sgn}(V)|V|^{1/2}.$

\subsection{Schatten--von Neumann trace ideals \cite{Schatten1960,VonNeumann1932,Seir10}}

For Schr\"odinger operators, Schatten--von Neumann trace ideals enter naturally through the Birman--Schwinger principle. Indeed, if
\[
H=-\Delta+V,
\]
then spectral information on \(H\) can often be reduced to estimates on an associated Birman--Schwinger operator \(K(\lambda)\). In particular, Schatten norm bounds on \(K(\lambda)\) provide a quantitative way to control the discrete spectrum and, ultimately, eigenvalue sums.

Let \(\mathcal H\) be a Hilbert space and let \(T\colon \mathcal H\to\mathcal H\) be compact. Denote by $s_n(T)$, $n\in\mathbb N$,
the singular values of \(T\), that is, the eigenvalues of \(|T|=(T^*T)^{1/2}\), counted with multiplicity. For \(1\le p<\infty\), the Schatten class \(\mathfrak S_p(\mathcal H)\) is defined by
\[
\mathfrak S_p(\mathcal H)
:=
\left\{T\in\mathcal K(\mathcal H): \|T\|_{\mathfrak S_p}<\infty\right\},
\]
where
\[
\|T\|_{\mathfrak S_p}
:=
\left(\sum_{n=1}^\infty s_n(T)^p\right)^{1/p}.
\]
Thus, \(\mathfrak S_p\)-membership measures the summability of the singular values of \(T\).

In the applications considered here, one proves that the Birman--Schwinger operator \(K(\lambda)\) belongs to a suitable Schatten class and estimates
\[
\|K(\lambda)\|_{\mathfrak S_p}.
\]
Such bounds are the basic input for trace ideal methods and allow one to derive quantitative information on the eigenvalues of the corresponding Schr\"odinger operator.

\subsection{Lieb--Thirring-type inequalities} 
A key development was the work of Lieb and Thirring~\cite{LT75,LT76}, who proved bounds on moments of negative eigenvalues of Schr\"odinger operators $H$ in terms of Lebesgue norms of the real-valued potential. These results were crucial in providing a simple proof of the stability of matter; for a textbook introduction to this topic, see, e.g.,~\cite{LS10}.

More precisely, let \(H=-\Delta+V\) on \(L^2(\mathbb R^d)\), where \(V:\mathbb R^d\to\mathbb R\) is real-valued, and let \(\{\lambda_j\}\) denote the negative eigenvalues of \(H\), counted with multiplicity. Then
\[
\sum_{\lambda_j<0} |\lambda_j|^\gamma
\;\le\;
L_{\gamma,d}
\int_{\mathbb R^d} V_-(x)^{\gamma+d/2}\,dx,
\]
where \(V_-:=\max\{-V,0\}\), provided that
\[
\gamma \ge \frac12 \quad \text{if } d=1,\qquad
\gamma >0 \quad \text{if } d=2,\qquad
\gamma \ge 0 \quad \text{if } d\ge 3.
\]
Here \(L_{\gamma,d}>0\) is a constant depending only on \(\gamma\) and the dimension \(d\).

The cases \(\gamma>\frac12\) for \(d=1\) and \(\gamma>0\) for \(d\ge2\) were established by Lieb and Thirring~\cite{LT75,LT76}. In the endpoint case \(\gamma=0\) for \(d\ge3\), the left-hand side reduces to the number of negative eigenvalues; this case was proved independently by Cwikel \cite{MC}, Lieb \cite{Lieb1976Bounds}, and Rozenblum \cite{RozGV}, and is therefore also known as the Cwikel--Lieb--Rozenblum bound. The remaining critical case \(\gamma=\frac12\) in dimension \(d=1\) was proved by Weidl \cite{Weidl1996}.
Such inequalities establish a precise quantitative link between accumulation of eigenvalues of $H$ and $L^p$-integrability of the potential.


\section{Eigenvalue bounds for complex decaying potentials}\label{sec5}

Although originally formulated in the self-adjoint setting, the ideas described above provided the analytic foundation for later progress on non-self-adjoint Schr\"odinger operators. In particular, the Birman--Schwinger principle, Schatten class methods, and Lieb--Thirring-type inequalities have all been adapted and refined to treat complex-valued potentials and eigenvalues off the real axis.

These developments led to quantitative bounds on individual eigenvalues, eigenvalue sums, and the location of the discrete spectrum for non-self-adjoint Schr\"odinger operators. Important recent contributions in this direction were made by Frank~\cite{FR1,FR3}, Frank, Laptev, Lieb, and Seiringer~\cite{FRLALESR}, Frank, Laptev, and Weidl~\cite{Frank_Laptev_Weidl_2022}, and Frank and Simon~\cite{FRS2}.
Some of the earliest contributions in this area are due to Pavlov \cite{Pavlov1966,Pavlov1967,Pavlov1968}.

The first estimate of single, complex eigenvalues in terms of Lebesgue norms was obtained by Abramov, Aslanyan, and Davies~\cite{AAD}.
For complex-valued potentials on the line, they proved that
\begin{equation*}\label{abraskadav}
  \sup_{z\in\sigma(-\Delta+V)\setminus(0,\infty)} |z|^{1/2}
\le \frac12 \int_{\mathbb{R}} |V(x)|\,dx,  
\end{equation*}
where $\sigma(A)$ denotes the spectrum of an operator $A$. Further generalizations and related results concerning \eqref{abraskadav} were obtained by Davies and Nath in \cite{EBDJN}, and by Safronov in \cite{SOsum}.

\subsection{Deterministic Potentials on $\R^d$}

Following earlier work of Laptev and Safronov \cite{LASO,SO2009}, Frank \cite{FR1} established for complex-valued potentials the estimate
\begin{equation*}\label{Frankbounds}
   |z|^{q-d/2} \lesssim \int_{\mathbb{R}^d} |V(x)|^q\,dx 
\end{equation*}

for eigenvalues $z$ of $-\Delta+V$ whenever $\frac d2<q \le \frac12(d+1)$ for $d\ge2$. We refer to potentials $V$ in this regime as short-range. Earlier, Laptev and Safronov \cite{LASO} conjectured that this estimate should remain valid in the larger range $q\le d$. However, the short-range condition is in fact optimal: the above bound fails for $q > \frac12(d+1)$. Counterexamples for $z>0$ were constructed by Frank and Simon \cite{FRS2}, while counterexamples for $\operatorname{Im} z \neq 0$ were later obtained by B\"ogli and Cuenin \cite{CJCBS}, thereby disproving the conjecture.
More precisely, for \(\varepsilon>0\), let
\[
\chi_\varepsilon
:=
\mathbf{1}_{\{(x_1,x')\in \mathbb{R}\times\mathbb{R}^{d-1}:\ |x_1|<\varepsilon^{-1},\ |x'|<\varepsilon^{-1/2}\}}.
\]
Cuenin and Bögli constructed potentials \(V_\varepsilon\) satisfying
\[
|V_\varepsilon|\le \varepsilon\,\chi_\varepsilon
\]
such that
\[
z_\varepsilon:=1+i\varepsilon
\]
is an eigenvalue of the Schr\"odinger operator \(H_{V_\varepsilon}\). They show that for \(d\ge 2\) and \(q>(d+1)/2\),
\begin{equation}\label{counterex}
   \limsup_{\varepsilon\to 0}
|z_\varepsilon|^{\,q-\frac d2}\,
\|V_\varepsilon\|_{L^q(\mathbb{R}^d)}^{\,q}
=+\infty. 
\end{equation}
Hence, in this range of exponents, no uniform eigenvalue estimate of the type predicted by the short-range theory can hold along this family. In addition, it was proved that for \(d\ge 2\) and \(q\ge (d+1)/2\),
\[
\liminf_{\varepsilon\to 0}
\frac{
\operatorname{dist}(z_\varepsilon,\mathbb{R}_+)^{\,q-\frac{d+1}{2}}
|z_\varepsilon|^{1/2}
}{
\|V_\varepsilon\|_{L^q(\mathbb{R}^d)}^{\,q}
}
>0.
\]
In particular, this family also yields the corresponding lower-bound asymptotics at and above the critical exponent. The construction of the counterexample is motivated by the classical Knapp example from restriction theory, which demonstrates the sharpness of the Stein–Tomas theorem; see, for instance, \cite{Demeter_2020,CS}.

\subsection{Random Schr\"odinger operators on $\R^d$.}\label{randomschr}

Motivated by Bourgain's work \cite{BJ} on Schr\"odinger operators on $\Z^2$, randomization procedures can be used to improve bounds that are sharp in the deterministic setting, at the expense of allowing for a small exceptional set. In this context, one considers random Schr\"odinger operators of Anderson type, that is, operators of the form
\[
-\Delta+V_\omega,
\]
where \(V_\omega\) is obtained by randomizing a deterministic potential \(V\) at scale \(h>0\) according to
\[
V_\omega(x)=\sum_{j\in h\mathbb Z^d}\omega_j V(x)\mathbf{1}_Q((x-j)/h),
\qquad Q=[0,1)^d,
\]
and where the random variables \((\omega_j)_{j\in h\mathbb Z^d}\) are assumed to be independent, mean-zero Gaussian or symmetric Bernoulli random variables.

More precisely, although Frank's estimates \eqref{Frankbounds} are sharp, the short-range exponent can essentially be doubled with high probability. One of the results of Cuenin--Merz \cite{CJCMK} states that there exist constants $M_0,c>0$ such that for any $R,\lambda>0$, $0<h<R$, $|\varepsilon|\ll \lambda$, $q\le d+1$, $V\in L^q(\R^d)$ supported in a ball of radius $R$, and $M\ge M_0$, every eigenvalue $z=(\lambda+i\varepsilon)^2$ of $-\Delta+V_w$ satisfies
\begin{equation*}\label{randschr}
    \frac{\lambda^{2-\frac{d}{q}}}{\langle\lambda h\rangle^{{\frac{d}{2}}}\left(\log\langle\lambda R\rangle\right)^{\frac{7}{2}}}\le M\|V\|_{L^q(\R^d)},
\end{equation*}
except for $\omega$ in a set of measure at most $\exp(-cM^2)$.

In connection with the counterexample \eqref{counterex}, this implies that, under randomization at scale
\[
h\le\left(\varepsilon^{\frac{d+1}{2q}-1}\log\left(\frac{1}{\varepsilon}\right)^{-\frac{7}{2}}\right)^{\frac{2}{d}},
\]
the counterexample for $\frac{1}{2}(d+1)<q\le d+1$ is destroyed with high probability.

Estimates for sums of eigenvalues have also been investigated by Cuenin and Merz \cite{KMJCCKyoto} and by Safronov \cite{S2023}. 

\subsection{Deterministic Schr\"odinger operators on compact manifolds.}\label{jeanclaudecompact} Let 
 $M$ be a smooth compact manifold without boundary. In \cite{CJC}, Cuenin proved spectral bounds for the Schr\"odinger operator $-\Delta_g+V$, when $V\in L^q(M)$, which state that
\begin{equation*}\label{eq:deterministicmainresult}
    \operatorname{spec}(-\Delta_g+V) \subseteq \bigcup_{k\in\N_0}D(\lambda_k^2,C_{M,q,g}r_k), \cup \left\{z\in\C:\, |z|^{\frac12}(1+|z|)^{-\sigma(q)} \lesssim_{M,q,g}  \|V\|_{L^q(M)}\right\},  
\end{equation*}
where \(\{\lambda_k^2\}_{k=0}^\infty\) are the eigenvalues of \(-\Delta_g\), and
  $$r_k := \|V\|_{L^q(M)}\cdot(1+\lambda_k)^{2\sigma(q)},$$
and
\begin{align*}
  \label{eq:deterministicsigma}
  \sigma(q) :=
  \begin{cases}
    \frac{d}{2q}-\frac12 & \quad \text{if} \ \frac{d}{2}\leq q\leq\frac{d+1}{2}, \\
    \frac{d-1}{4q} & \quad \text{if} \ \frac{d+1}{2}\leq q \leq \infty,
  \end{cases}
\end{align*}
where $\lambda_k^2$ are the eigenvalues of the free Laplacian $\Delta_g$. See more details in Section \ref{freelaplacecompactmanifolds}.

In particular, this estimate is optimal for Zoll manifolds, i.e. compact Riemannian manifolds for which every geodesic is closed and all unit-speed geodesics have the same period. The prototypical example is the round sphere \((S^n,g_{\mathrm r})\), equipped with its standard round metric \(g_{\mathrm r}\) of constant curvature. Its geodesics are exactly the great circles, so after normalization all unit-speed geodesics are closed and have common period \(2\pi\); see \cite[ch.~8.4.1]{z17} for further details.

\subsection{Random Schr\"odinger operators on compact manifolds}
For compact manifolds, one can combine methods analogous to those used in the proofs of the results in Section~\ref{randomschr} with Sogge's spectral bounds from \cite{CS,SOGGE1988} to obtain improvements to \eqref{eq:deterministicmainresult} that are comparable to the improvement from \eqref{Frankbounds} to \eqref{randschr}. This result will appear in forthcoming work of Cuenin, Merz, and the author.

\section{Generalization to fractional Laplacians}\label{sec6}

The proofs in Cuenin's work \cite{CJC} are based on spectral bounds, which apply to a rather general class of operators, and the works of Dos Santos Ferreira, Kenig, and Salo \cite{DosSantosFerreiraKenigSalo2014}, and of Frank and Schimmer \cite{FRANKSchimmer2017}. 
This suggests the possibility of extending the above results beyond the classical Schr\"odinger setting. It is therefore natural to investigate whether the above-mentioned results of Cuenin admit an analogous extension to the fractional Laplacian setting. 
Further important related results are due to Kenig, Ruiz and Sogge \cite{KenigRuizSogge1987}, Shen and Zhao \cite{ShenZhao08}, and Bourgain, Shao, Sogge, and Yao \cite{BourgainShaoSoggeYao2015}.

The following theorem is due to the author and will appear in forthcoming work. 
We include here only a brief outline of the proof; full details will be provided in that forthcoming paper.

\begin{theorem}\label{psithm1}
     Let \((-\Delta_g)^{\alpha/2}\) be the fractional Laplacian on a closed Riemannian manifold \((M,g)\) of dimension \(d \ge 2\). Let 
$\frac{2d}{d+1} \le \alpha \le d$,
and
\[
\frac{d}{\alpha} < q \le \frac{2d}{d-\alpha}
\quad \text{if } d>\alpha,
\qquad \text{or} \qquad
1 \le q < \infty
\quad \text{if } d=\alpha.
\]
     Then there exists a constant \(C = C(M, g, q, \alpha)\) such that for all \(V \in L^q(M)\),
  \begin{equation}\label{result}
    \mathrm{spec}((-\Delta_g)^{\alpha/2} + V)
\subset \bigcup_{k=0}^\infty D\bigl(\lambda_k^\alpha,\; C\,r_k \bigr)
\;\cup\; \Bigl\{ z \in \mathbb{C} : |z|^{1-\frac1\alpha} (1+|z|)^{-\frac{2\sigma(q)}{\alpha}} \le C \,\|V\|_{L^q(M)} \Bigr\},  
  \end{equation}
where \(\{\lambda_k^\alpha\}_{k=0}^\infty\) are the eigenvalues of \((-\Delta_g)^{\alpha/2}\), and
\[
r_k := \|V\|_{L^q(M)}(1 + \lambda_k)^{2\sigma(q)},
\]
with
\[
\sigma(q) :=
\begin{cases}
\dfrac{d}{2q} - \dfrac12, & \tfrac{d}{\alpha} < q \le \tfrac{d+1}{2}, \\
\dfrac{d-1}{4q}, & \tfrac{d+1}{2} \le q \le \infty.
\end{cases}
\]
\end{theorem}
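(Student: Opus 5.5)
The proof follows Cuenin's argument for \(-\Delta_g+V\) via the Birman--Schwinger principle of Section~\ref{BS}. Let \(z\notin\operatorname{spec}((-\Delta_g)^{\alpha/2})\) be an eigenvalue of \((-\Delta_g)^{\alpha/2}+V\), with eigenfunction \(u\). Then
\[
u=-R_0(z)Vu,\qquad R_0(z):=((-\Delta_g)^{\alpha/2}-z)^{-1}.
\]
By H\"older's inequality, with \(\frac1{q}=\frac1{p}-\frac1{p'}\) and \(p\) the dual exponent determined by \(q\), we have \(\|Vu\|_{L^p}\le\|V\|_{L^q}\|u\|_{L^{p'}}\). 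Hence
\[
1\le \|V\|_{L^q}\,\|R_0(z)\|_{L^p\to L^{p'}}.
\]
So the theorem reduces to a uniform resolvent estimate. We will show
\[
\|R_0(z)\|_{L^p\to L^{p'}}\lesssim \max\Bigl\{\,|z|^{-1+\frac1\alpha}(1+|z|)^{\frac{2\sigma(q)}{\alpha}},\ \sup_k \frac{(1+\lambda_k)^{2\sigma(q)}}{|z-\lambda_k^\alpha|}\Bigr\},
\]
where the supremum runs only over the \(k\) with \(\lambda_k^\alpha\) close to \(z\). The first branch gives the second set in \eqref{result}, and the second branch puts \(z\) into some disc \(D(\lambda_k^\alpha,Cr_k)\). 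The hypotheses \(q>d/\alpha\) and \(q\le 2d/(d-\alpha)\) are exactly what make \(p\) range over the admissible Sobolev and uniform Sobolev exponents for an operator of order \(\alpha\). The lower bound \(\alpha\ge 2d/(d+1)\) ensures that \(d/\alpha\le (d+1)/2\), so the short-range branch of \(\sigma(q)\) is nonempty.

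To prove the resolvent bound, write \(z=\mu^\alpha\) with \(\mu\) in a sector, and decompose spectrally with respect to \(P=\sqrt{-\Delta_g}\). Let \(\chi_\lambda\) denote the spectral projection onto \(P\in[\lambda,\lambda+1)\). Sogge's spectral cluster bounds \cite{CS,SOGGE1988} give
\[
\|\chi_\lambda\|_{L^p\to L^{p'}}\lesssim(1+\lambda)^{2\sigma(q)}.
\]
This is precisely where the two regimes of \(\sigma(q)\) come from, namely the point \(q=(d+1)/2\). We then split \(R_0(z)\) into three pieces according to \(\lambda_k\):

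\begin{enumerate}
\item \emph{Far regime}, \(|\lambda_k-|\mu||\gtrsim |\mu|\). Here the multiplier \((\lambda^\alpha-z)^{-1}\) is a symbol of order \(-\alpha\). The piece is therefore bounded \(L^p\to L^{p'}\) by Sobolev embedding, using \(p\) in the range allowed by \(q\le 2d/(d-\alpha)\), via Hardy--Littlewood--Sobolev on manifolds. Together with low-frequency terms this yields the \(|z|^{-1+1/\alpha}\)-type factor.
\item \emph{Intermediate regime}, \(1\le|\lambda_k-|\mu||\ll|\mu|\). Summing the cluster bounds against \(|\lambda^\alpha-\mu^\alpha|^{-1}\approx (|\mu|^{\alpha-1}|\lambda-|\mu||)^{-1}\) produces the factor \(|\mu|^{1-\alpha}(1+|\mu|)^{2\sigma}\), i.e.\ \(|z|^{-1+1/\alpha}(1+|z|)^{2\sigma/\alpha}\).
\item \emph{Single nearest cluster}, \(|\lambda_k-|\mu||<1\). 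This cluster is bounded by \((1+\lambda_k)^{2\sigma(q)}/\operatorname{dist}(z,\{\lambda_k^\alpha\})\), which gives the discs.
\end{enumerate}

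The main obstacle is the intermediate summation. A naive dyadic sum of the cluster bounds loses a logarithm, or fails outright in the short-range regime. Cuenin handles this for \(\alpha=2\) using the uniform resolvent estimates of Dos Santos Ferreira--Kenig--Salo \cite{DosSantosFerreiraKenigSalo2014} and Frank--Schimmer \cite{FRANKSchimmer2017}, which rely on Hadamard parametrix and oscillatory integral bounds for \((-\Delta_g-\mu^2)^{-1}\). For general \(\alpha\), we will reduce to that case via the factorization
\[
\lambda^\alpha-\mu^\alpha=(\lambda^2-\mu^2)\,m_\mu(\lambda),
\qquad m_\mu(\lambda)=\frac{\lambda^\alpha-\mu^\alpha}{\lambda^2-\mu^2}.
\]
The multiplier \(m_\mu(P)^{-1}\) is a nonvanishing symbol of order \(2-\alpha\), uniformly in \(\mu\) on the relevant range, and should be bounded on \(L^p\) after localizing to the annulus \(\lambda\sim|\mu|\), by the Seeger--Sogge/Mikhlin multiplier theory on compact manifolds. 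Composing this with the \(\alpha=2\) uniform resolvent bound gives the intermediate estimate with the gain \(|\mu|^{2-\alpha}\). I expect verifying this uniform multiplier bound near \(\lambda=\mu\), together with the endpoint case \(d=\alpha\) (where Sobolev embedding is replaced by the \(L^p\to L^{p'}\) boundedness of a log-type kernel for all finite \(q\)), to be the most delicate step.
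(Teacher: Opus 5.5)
Your Birman--Schwinger reduction and your split into a far piece, a nearest-cluster piece and an intermediate piece match the structure of the paper's argument. The step carrying all the weight, however, fails on part of the stated range. That step is the log-free intermediate bound obtained from $\tau^\alpha-\mu^\alpha=(\tau^2-\mu^2)m_\mu(\tau)$.

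Composing with an $\alpha=2$ resolvent bound requires $(-\Delta_g-\mu^2)^{-1}$ to be bounded $L^p\to L^{p'}$ for the same exponent pair, i.e.\ $\frac1q=\frac1p-\frac1{p'}\le\frac2d$. When $\alpha>2$ the theorem allows $\frac d\alpha<q<\frac d2$ (e.g.\ $d=4$, $\alpha=3$, $q\in(\frac43,2)$). In that range $(-\Delta_g-\mu^2)^{-1}$ is unbounded $L^p\to L^{p'}$ for every $\mu$. Its kernel carries the local singularity $c\,|x-y|^{2-d}$, and testing on the indicator of a $\delta$-ball gives a ratio $\gtrsim\delta^{2-d/q}\to\infty$. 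So Dos Santos Ferreira--Kenig--Salo, Frank--Schimmer and \cite{CJC} cannot supply the input you need. You would need a separate frequency-localized order-two estimate $\|(P^2-\mu^2)^{-1}\tilde\beta(P/|\mu|)\|_{L^p\to L^{p'}}\lesssim|\mu|^{d/q-2}$, and your plan does not contain one.

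The paper does not pass through $\alpha=2$ at all. It takes \eqref{firsteq} on $\Gamma=\{(\lambda\pm i)^\alpha\}$ directly for the order-$\alpha$ operator from \cite[Theorem 2.25]{CUENIN2024110214}, and extends it to all of $\Xi$ by a Phragm\'en--Lindel\"of argument. When $q\ge\frac d2$, in particular for every $\alpha\le2$, your route does work on $\Xi$. Combining the factorization with a smooth annular cutoff $\beta(P/|\mu|)$, the Seeger--Sogge multiplier theorem and the $\alpha=2$ bound of \cite{CJC} gives \eqref{firsteq} there directly. That is a legitimate alternative, trading the general-order input for the Laplacian case.

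Second, as written your argument does not cover $z$ near the spectrum. For $z=\mu^\alpha\notin\Xi$, i.e.\ $|\operatorname{Im}\mu|<1$, the point $\mu^2$ lies inside the $\alpha=2$ parabola, where the uniform bounds you cite say nothing. Your intermediate piece is also defined by the sharp cutoff $1\le|\lambda_k-|\mu||\ll|\mu|$. That cutoff is not a smooth $L^p$-bounded multiplier, so the piece cannot be split off and composed as you propose.

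The paper's Step~2 obtains \eqref{secondeq} from the bound on $\Gamma$ together with Sobolev embedding and Sogge's cluster bounds, following \cite{CJC}. The mechanism you are missing is a comparison with a point of $\Gamma$. Take $z'=(\operatorname{Re}\mu\pm i)^\alpha\in\Gamma$. The difference $R_0(z)-R_0(z')=(z-z')R_0(z)R_0(z')$ has a multiplier of size $|\mu|^{1-\alpha}|\tau-\operatorname{Re}\mu|^{-2}$ for $\tau\sim|\mu|$ away from the nearest cluster. The cluster bounds therefore sum without logarithmic loss, and $\|R_0(z')\|_{L^p\to L^{p'}}$ is controlled by \eqref{firsteq}. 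Only the nearest cluster then produces the term $d(z)^{-1}(1+|z|)^{2\sigma(q)/\alpha}$.

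Alternatively, you could compose the smooth annular piece with the interior $\alpha=2$ bound of \cite{CJC}, using $\operatorname{dist}(\mu^2,\operatorname{spec}(-\Delta_g))\simeq|\mu|^{2-\alpha}d(\mu^\alpha)$. Again, this only works for $q\ge\frac d2$.

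A minor point: your far-regime Sobolev step uses $q\ge\frac d\alpha$, not $q\le\frac{2d}{d-\alpha}$ as you state.
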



\begin{rem}
    We believe that the structural framework developed in \cite{CUENIN2024110214,CJC,FRANKSchimmer2017,KU15} allows to extend these results also to positive self-adjoint elliptic pseudodifferential operators of arbitrary positive order, under suitable geometric assumptions such as curvature conditions; see, for instance, Sogge's monograph \cite{CS}.
\end{rem}


\section{Proof idea of Theorem \ref{psithm1}}\label{sec7}
We follow Cuenin \cite{CJC} and adapt the proofs to the general case of fractional Laplace operators, see Section \ref{appA} or \cite{CS} for the precise definitions.

\begin{proof} Complex powers are taken with respect to the principal logarithm,
that is,
\[
    w^\alpha:=\exp(\alpha \log w),
    \qquad \text{Arg}  (w)\in(-\pi,\pi).
\]
Define
\[
    \Gamma
    :=
    \left\{
        (\lambda+i)^\alpha:\lambda\geq \cot\!\left(\frac{\pi}{\alpha}\right)
    \right\}
    \cup
    \left\{
        (\lambda-i)^\alpha:\lambda\geq \cot\!\left(\frac{\pi}{\alpha}\right)
    \right\}.
\]
The two arcs meet at the point
\[
    z_\ast
    :=
    \left(\cot\!\left(\frac{\pi}{\alpha}\right)+i\right)^\alpha
    =
    \left(\cot\!\left(\frac{\pi}{\alpha}\right)-i\right)^\alpha
    =
    -\sin\!\left(\frac{\pi}{\alpha}\right)^{-\alpha}.
\]
Thus \(\Gamma\) is a continuous curve in \(\mathbb C\). We define
\(\Xi_0\) to be the connected component of
\(\mathbb C\setminus\Gamma\) containing the half-line
\[
    (-\infty,z_\ast).
\]
Finally, we set
\[
    \Xi:=\Xi_0\cup\Gamma.
\]
Geometrically, \(\Xi_0\) is the exterior region bounded by the two arcs
forming \(\Gamma\); see Figure~\ref{fig}.

\begin{figure}
    \centering
    \includegraphics[width=0.75\linewidth]{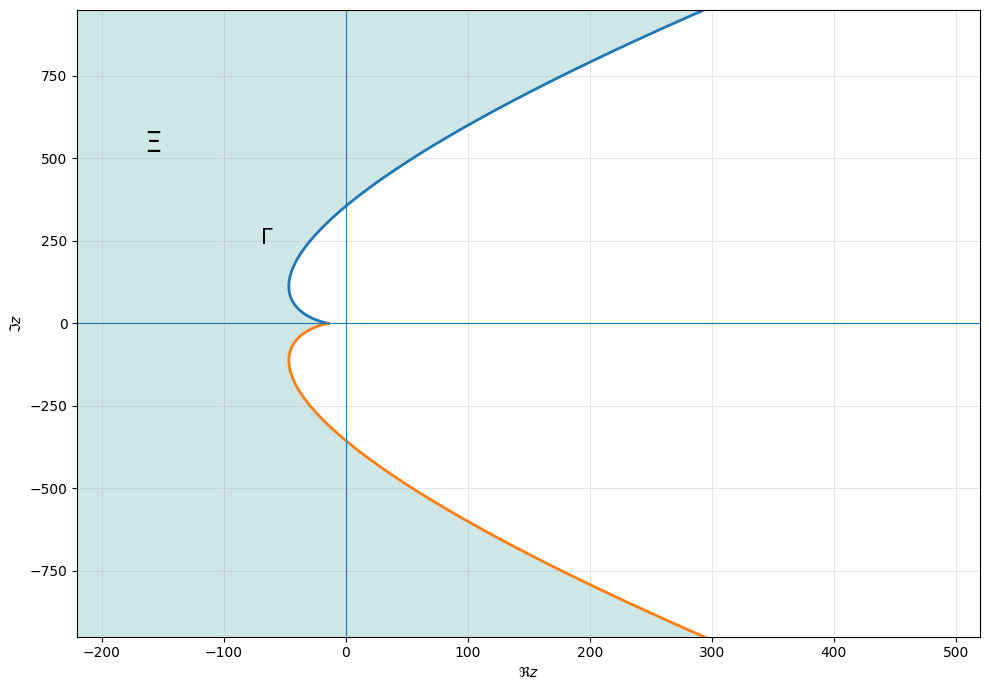}
    \caption{Region $\Xi$ and boundary $\Gamma$ for $\alpha=5$.}
    \label{fig}
\end{figure}

    As in \cite{CJC} Theorem \ref{psithm1} follows from the Birman-Schwinger principle, and
    \begin{subequations}
        \begin{align}
        \label{firsteq}
\|((-\Delta_g)^{\alpha/2} - z)^{-1}\|_{L^{p}(M)\to L^{p'}(M)}
&\lesssim |z|^{\frac{2\nu(p')+1}{\alpha}-1},
\qquad  &\text{for}\ z \in\Xi,
\\ \label{secondeq}
\|((-\Delta_g)^{\alpha/2} - z)^{-1}\|_{L^{p}(M)\to L^{p'}(M)}
&\lesssim d(z)^{-1}(1+|z|)^{\frac{2\nu(p')}{\alpha}}
 &\text{for}\ z\notin\Xi,
\end{align} 
where 
$$\nu(p')=\sigma(q),\quad d(z)=\operatorname{dist}\left(z,\operatorname{spec}\left((-\Delta_g)^{\frac\alpha2}\right)\right)$$
    \end{subequations}
    and 
    $$\frac{1}{p}+\frac{1}{p'}=1\ \quad \text{and}\quad \frac1q=\frac1p-\frac{1}{p'}.$$

\textbf{Step 1. Complex extension:} The first estimate \eqref{firsteq} is known to be true only in $\Gamma$, by Cuenin \cite[Theorem 2.25]{CUENIN2024110214}. A Phragm\'en-Lindelöf argument is sufficient to extend the equation to all of $\Xi$.

\textbf{Step 2. Bounds near the singularities:}
The estimate in \eqref{secondeq} can be obtained by an argument analogous to that in \cite{CJC}. 
The required ingredients are the relevant Sobolev embeddings, see e.g. \cite[Chapter 13]{taylor2010partial}, and Sogge's spectral cluster bounds, see \cite[Chapter 5]{CS} or \cite{SOGGE1988}, which allow one to adapt several parts of the proof in \cite{CJC}. 
Moreover, the necessary estimates on the asymptotic behavior of the associated complex quantities are ensured by the appropriate choice of the parabola-like contour $\Gamma$.

\end{proof}

\appendix
\section{Free Laplacian on compact manifolds}\label{app}

Since our main goal is to study the spectral properties---in particular the eigenvalues---of the Schr\"odinger operator
\[
H=-\Delta+V
\]
on compact manifolds, we begin by briefly reviewing the unperturbed case. We also sketch the main ideas of the proof. For further details and comprehensive treatments, see, for example, \cite{KT1,RSII,RSIV,tg}.

\subsection{Free Laplacian on compact manifolds}\label{freelaplacecompactmanifolds}
Let $(M,g)$ be a compact smooth Riemannian manifold without boundary, 
and let $H = -\Delta_g$ act on $L^2(M)$ with form domain $H^1(M)$ 
and operator domain $H^2(M)$. 
This operator is non-negative and self-adjoint, as follows from the closed, 
lower-bounded quadratic form and the Friedrichs construction. 
By elliptic regularity, for every $u \in H^2(M)$ we have
\[
\|u\|_{H^2} \lesssim \|Hu\|_{L^2} + \|u\|_{L^2}.
\]
Hence, for any fixed $\lambda > 0$, if $f \in L^2(M)$ and $u$ satisfies 
$(H+\eta)u = f$, then
$(H+\eta)^{-1} : L^2(M) \to H^2(M)$
is bounded. 
Since $M$ is compact, the embedding $H^2(M) \hookrightarrow L^2(M)$ 
is compact by the Rellich--Kondrachov theorem 
(see, e.g., \cite[Chapter~5.7]{LE}). 
Consequently, $(H+\eta)^{-1}$ is a compact operator. 
By the spectral theorem for compact operators, its spectrum consists of 
at most countably many eigenvalues $\mu_n$ that can accumulate only at zero. 
This directly implies that $H$ possesses at most countably many eigenvalues 
$\lambda_n = \mu_n^{-1} - \eta$. 
More generally, the same reasoning---relying on elliptic regularity and the Rellich--Kondrachov theorem---applies to elliptic self-adjoint pseudodifferential operators of positive order, such as the fractional Laplacian \(\flap\); see, for example, \cite{HL3,HL4}.

\subsection{Laplacian on compact manifold with bounded potential} Finally, we remark that for $V\in L^\infty$ under the same assumption as Theorem \ref{psithm1}, $(-\Delta_g)^{\alpha/2}+V$ can be defined as an $m-$sectorial operator by quadratic form methods, see \cite{FR3} for a proof and \cite{KT1} for the general theory.

\vspace{0,4cm}

\textbf{Acknowledgments}
This research was funded in whole, or in part, by the Austrian Science Fund (FWF)  [Grant-DOI 
10.55776/P35322, 10.55776/PAT5120424, and 
10.55776/PAT4632823]. 
The author is grateful to Christoph Aistleitner, Andrei Shubin, Petr Siegl, Jean-Claude Cuenin, and Konstantin Merz for many helpful discussions. He would like to express his special thanks to Konstantin Merz for his particularly valuable support and insights.

\bibliographystyle{abbrv}
\bibliography{main}







\end{document}